\theoremstyle{plain}
\newtheorem{theorem}{Theorem}[section]
\newtheorem{case}{Case}
\newtheorem{lemma}[theorem]{Lemma}
\newtheorem{proposition}[theorem]{Proposition}
\newtheorem{theorem-definition}[theorem]{Theorem-Definition}
\theoremstyle{definition}
\newtheorem{definition}[theorem]{Definition}
\newtheorem{notation}[theorem]{Notation}
\newtheorem{convention}[theorem]{Convention}
\theoremstyle{remark}
\newtheorem{example}[theorem]{Example}
\numberwithin{equation}{section}
\newcommand{\N}{{\mathds{N}}}
\newcommand{\R}{{\mathds{R}}}
\newcommand{\C}{{\mathds{C}}}
\newcommand{\D}{{\mathfrak{D}}}
\newcommand{\A}{{\mathfrak{A}}}
\newcommand{\B}{{\mathfrak{B}}}
\newcommand{\Lip}{{\mathsf{L}}}
\newcommand{\qpropinquity}[1]{{\mathsf{\Lambda}_{#1}}}
\newcommand{\StateSpace}{{\mathscr{S}}}
\newcommand{\Qqcms}[1]{${#1}$-quasi-Leibniz quantum compact metric space}
\newcommand{\sa}[1]{{\mathfrak{sa}\left({#1}\right)}}
\newcommand{\dom}[1]{{\operatorname*{dom}\left({#1}\right)}}
\renewcommand{\geq}{\geqslant}
\renewcommand{\leq}{\leqslant}
\newcommand{\Latremoliere}{Latr\'{e}moli\`{e}re}
\newcommand{\vast}{\bBigg@{4}}
\newcommand{\Vast}{\bBigg@{5}}
\newcommand{\myMod}[2]{#1 \ \mathrm{mod} \ #2}
\newcommand{\blockmx}[1]{
	\begin{bmatrix}
		#1 && \text{\huge 0}\\
		& \ddots & \\
		\text{\huge 0} && #1\\
	\end{bmatrix}}
\newcommand{\AthruB}[2]{\{#1, \cdots, #2 \}}
\newcommand{\floorfunc}[1]{\lfloor #1 \rfloor}
\begin{document}

\title[Quantum metrics from the trace on full matrix algebras]{Quantum metrics from the trace on full matrix algebras  }
\author{Konrad Aguilar$^1$}
\address{School of Mathematical and Statistical Sciences, Arizona State University, 901 S. Palm Walk, Tempe, AZ 85287-1804}
\email{konrad.aguilar@asu.edu}
\urladdr{}
\author{Samantha Brooker$^2$}
\address{Department of Mathematics, University of Denver, 2390 South York Street, Denver CO 80208}
\email{sjbrooke@asu.edu}
\urladdr{}

\date{\today\\ 
$^1$ Arizona State University \\
$^2$ University of Denver}
\subjclass[2010]{Primary:  46L89, 46L30, 58B34.}
\keywords{Noncommutative metric geometry, Quantum Metric Spaces, Lip-norms, Gromov-Hausdorff propinquity, C*-algebras, full matrix algebras}
\thanks{The second author was partially supported by the {\em Summer Research Grant} provided by the Undergraduate Research Center at University of Denver}

\begin{abstract}
We prove that, in the sense of the Gromov-Hausdorff propinquity, certain natural quantum metrics on the algebras of $n\times n$-matrices are separated by a positive distance when n is not prime.

\end{abstract}
\maketitle

\setcounter{tocdepth}{1}
\tableofcontents

\section{Introduction and Background}
Motivated by high energy physics, M. A. Rieffel developed the notion of a "noncommutative" or "quantum" compact metric space, and initiated the study of topologies over classes of such quantum metric spaces \cite{Rieffel00, Rieffel98a, Rieffel01}.  The convergence is studied by use of a distance on the classes of these spaces.  F. \Latremoliere \ introduced the quantum Gromov-Hausdorff propinquity \cite{Latremoliere13, Latremoliere13c, Rieffel15} as a noncommutative analogue of the Gromov-Hausdorff distance \cite{burago01}, adapted to the theory of C*-algebras.

Quantum metric spaces are built by adding some quantum metric to unital C*-algebras \cite{Rieffel00} (see \cite{Latremoliere15b} for a survey with many examples and references). C*-algebras are certain norm-closed algebras of bounded linear operators on Hilbert spaces, up to an appropriate notion of *-isomorphism \cite[Theorem I.9.12]{Davidson}. 
Thus, it is natural to look to various classes of C*-algebras and study them from the viewpoint of quantum compact metric spaces and the Gromov-Hausdorff propinquity.  The first author and F. \Latremoliere \ did just that in \cite{AL} with a class of C*-algebras called approximately finite-dimensional C*-algebras (AF algebras)\cite{Bratteli72}.
Their work constructs quantum metrics on AF algebras, which is then used to study the topology induced by the propinquity on various natural sets of AF algebras. In particular, these quantum metrics can be restricted to the finite dimensional C*-subalgebras of AF algebras. The present work further studies some of the geometric aspects of this construction on these finite-dimensional algebras when it is the case that these finite-dimensional algebras are full matrix algebras (algebras of complex-valued square matrices). Our focus is to prove that different quantum metrics induced by the first author and \Latremoliere's construction on the same full matrix algebras are indeed at positive distance, in the sense of the propinquity. By its nature, it is usually difficult to prove lower bounds on the propinquity, so our work tackles a delicate aspect of this theory. First, we begin by defining many of these objects, from C*-algebras to quantum compact metric spaces, while taking note of some theorems that will prove useful to our efforts. Definitions (\ref{algebra-def}---\ref{states-def}) are contained in \cite[Chapter I]{Davidson}.
 \begin{definition}\label{algebra-def}
 An {\em associative algebra over the complex numbers } $\C$ is a vector space $\A$ over $\C$ with an associative multiplication, denoted by concatenation, such that:
 \begin{align*}
 & a(b+c)=ab+ac \text{ and } (b+c)a=ba+ca \text{ for all } a,b,c \in \A \\
 & \lambda (ab)=(\lambda a) b=a(\lambda b)  \text{ for all } a,b \in \A, \lambda \in \C.
 \end{align*}
 In other words, the associative multiplication is a bilinear map from $\A \times \A$ to $\A$.
 
 We say that $\A$ is {\em unital} if there exists a multiplicative identity, denoted by $1_\A$.  That is:
 \begin{equation*}
 1_\A a=a=a1_\A \text{ for all } a \in \A.
 \end{equation*}
 \end{definition}
 \begin{convention}
All algebras are associative algebras over the complex number $\C$ unless otherwise specified.
\end{convention}
\begin{notation}
When $E$ is a normed vector space, then its norm will be denoted by $\|\cdot\|_E$ by default.
\end{notation}
\begin{definition}\label{banach-algebra}
A {\em normed algebra} is an algebra $\A$ with a norm $\Vert \cdot \Vert_\A$ such that:
\begin{equation*}
\Vert ab \Vert_\A \leq \Vert a \Vert_\A \Vert b \Vert_\A \text{ for all } a,b \in \A.
\end{equation*}

$\A$ is a  {\em Banach Algebra}  when $\A$ is complete with respect to the norm $\Vert \cdot \Vert_\A$.
\end{definition}
\begin{definition}\label{c*-algebra}
A {\em C*-algebra}, $\A$, is a Banach algebra such that  there exists an anti-multiplicative conjugate linear involution ${}^* : \A \longrightarrow \A$, called the {\em adjoint}.  That is, * satisfies:
\begin{enumerate}
\item \text{{\em (conjugate linear):} } $(\lambda (a+b))^* = \overline{\lambda}(a^* + b^*) \text{ for all } \lambda \in \C, a,b \in \A;$  
\item \text{{\em (involution):} }  $(a^*)^*=a \text{ for all } a \in \A;$ 
\item  \text{{\em (anti-multiplicative):} }  $(ab)^* = b^*a^* \text{ for all } a,b \in \A.$ 
\end{enumerate}
Furthermore, the norm, multiplication, and  adjoint together satisfy the identity:
 \begin{equation}\Vert a a^*\Vert_\A= \Vert a \Vert_\A^2 \text{ for all } a \in \A
\end{equation}
 called the {\em C*-identity}.
 
 The set of self-adjoint elements of a C*-algebra is the set $\sa{\A}=\{a \in \A : a=a^*\}$.

An element $u \in \A$ of a unital C*-algebra is {\em unitary} if $uu^*=u^*u=1_\A$. 

We say that $\B \subseteq \A$ is a {\em C*-subalgebra of} $\A$ if $\B$ is a norm closed subalgebra that is also self-adjoint, i.e. $a \in \B \iff a^* \in \B$.  
\end{definition}

Our main example will be the C*-algebra of $n \times n$-matrices over the complex numbers, which we define now.

\begin{example}[{\cite[Example I.1.1]{Davidson}}]\label{full-matrix-ex}
Fix $n \in \N \setminus \{0\}$.  We let $M_n(\C)$ denote the the C*-algebra of $n \times n$-matrices over the complex numbers called a {\em full matrix algebra}.  The algebra is given by the standard matrix operations and the adjoint is the conjugate transpose. The norm is given by the operator norm: 
\begin{equation*}
\Vert a \Vert_{M_n(\C)} = \sup \left\{ \|ax\|_{2} : x\in \C^n, \|x\|_2 \leq 1 \right\} \text{ for all } a\in M_n(\C),
\end{equation*}
where $ax$ denotes matrix multiplication of the matrix $a$ and column vector $x$, and $\|\cdot \|_2$ is the standard Euclidean $2$-norm on $\C^n$.  

Note that if $a \in \sa{M_n(\C)}$, then $\Vert a \Vert_{M_n(\C)}=\max \{ |\lambda| : \lambda \text{ is an eigenvalue of } a\}$ by {\cite[Corollary I.3.4]{Davidson}}.  The unit of $M_n(\C)$ is the identity matrix, which we denote by $I_n$.  For $a \in M_n(\C)$, we denote the $i$-row, $j$-column entry by $a_{i,j}$.
\end{example}
Next, we describe morphisms between C*-algebras.
\begin{definition}\label{c*-isomorphism}
Let $\A, \B$ be C*-algebras.  A {\em *-homomorphism} $\pi : \A \longrightarrow \B$ is a *-preserving, linear and multiplicative function.  

$\pi$ is a {\em *-monomorphism} if it is an injective *-homomorphism.

$\pi$ is a { \em *-epimorphism} if  $\pi$ is a surjective *-homomorphism. 

$\pi$ is a { \em *-isomorphism} if  $\pi$ is a bijective *-homomorphism.

$\A$ is {\em *-isomorphic} to $\D$ if there exists a *-isomorphism $\pi : \A \longrightarrow \D$, and we then write $\A \cong \D$.

If both $\A,\D$ are unital, then we call a *-homomorphism $\pi: \A \longrightarrow \D$ {\em unital} if $\pi(1_\A)=1_\D .$ 
\end{definition}
The next result shows that there are important analytical properties  (such as continuity, contractibility, and isometry) associated to these morphisms without further assumptions. Thus, only algebraic conditions are indeed needed to properly define morphisms between C*-algebras in  Definition (\ref{c*-isomorphism}).
\begin{proposition}[{\cite[Theorem I.5.5]{Davidson}}]\label{morphism-prop}
Let $\A, \D$ be C*-algebras.  If $\pi : \A \longrightarrow \D$ is a *-homomorphism, then $\pi$ is  continuous and contractive.  That is, its operator norm: 
\begin{equation*}\Vert \pi \Vert_{\B(\A, \D)}= \sup \{ \Vert \pi(a)\Vert_\D : a \in \A, \Vert a \Vert_\A = 1 \} \leq 1,
\end{equation*} or equivalently, for all $a \in \A$, we have $\Vert \pi(a) \Vert_\D \leq \Vert a \Vert_\A$. 

If $\pi: \A \longrightarrow \D$ is a *-homomorphism, then $\pi$ is an isometry if and only if $\pi$ is a *-monomorphism.  In particular, *-isomorphisms are isometries.
\end{proposition}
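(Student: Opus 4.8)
The plan is to derive both assertions from the interaction between the C*-identity and the behavior of the spectrum under a *-homomorphism. For $a$ in a C*-algebra, write $\sigma(a)$ for its spectrum and $r(a) = \sup\{|\lambda| : \lambda \in \sigma(a)\}$ for its spectral radius (working in the unitization when the algebra lacks a unit). Two standard facts will do all the work. First, a *-homomorphism cannot enlarge the spectrum: since $\pi$ carries invertibles to invertibles, $\sigma(\pi(a)) \subseteq \sigma(a) \cup \{0\}$ for every $a \in \A$. Second, for a self-adjoint element $h = h^*$ one has $\|h\|_\A = r(h)$; this follows by iterating the C*-identity to get $\|h^{2^k}\|_\A = \|h\|_\A^{2^k}$ and then invoking the spectral-radius formula $r(h) = \lim_k \|h^k\|_\A^{1/k}$.

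For contractivity, I would fix $a \in \A$ and apply these facts to the positive element $a^*a$. Combining the C*-identity with the multiplicativity and *-preservation of $\pi$,
\[
\|\pi(a)\|_\D^2 = \|\pi(a)^*\pi(a)\|_\D = \|\pi(a^*a)\|_\D = r(\pi(a^*a)) \leq r(a^*a) = \|a^*a\|_\A = \|a\|_\A^2,
\]
so $\|\pi(a)\|_\D \leq \|a\|_\A$. This is exactly contractivity, and continuity with $\|\pi\| \leq 1$ follows at once.

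For the isometry characterization, one direction is immediate: an isometry has trivial kernel and is therefore a *-monomorphism, and a *-isomorphism $\pi$ is isometric because both $\pi$ and $\pi^{-1}$ are contractive, forcing $\|a\|_\A = \|\pi^{-1}(\pi(a))\|_\A \leq \|\pi(a)\|_\D \leq \|a\|_\A$. The substantive direction---that a *-monomorphism is isometric---I would prove by contraposition. Suppose $\pi$ is not isometric; by the $a^*a$ reduction it suffices to produce a positive $h$ with $\|\pi(h)\|_\D < \|h\|_\A$. For such an $h$ both spectra lie in $[0,\|h\|_\A]$, and by the self-adjoint norm formula $\|h\|_\A \in \sigma(h)$ while $\max \sigma(\pi(h)) = \|\pi(h)\|_\D < \|h\|_\A$. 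I would then pick a continuous function $f$ on $[0,\|h\|_\A]$ with $f(0)=0$ that vanishes on $[0,\|\pi(h)\|_\D]$ but has $f(\|h\|_\A) \neq 0$, for instance $f(t) = \max(0, t - \|\pi(h)\|_\D)$. Since $\pi$ intertwines the continuous functional calculus, $\pi(f(h)) = f(\pi(h)) = 0$ because $f$ vanishes on $\sigma(\pi(h))$, whereas $f(h) \neq 0$ because $f$ does not vanish on $\sigma(h)$ (it is nonzero at $\|h\|_\A \in \sigma(h)$). Thus $f(h)$ is a nonzero element of $\ker \pi$, so $\pi$ is not injective.

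The one genuinely C*-algebraic input, and the step I expect to be the main obstacle, is the identity $\pi(f(h)) = f(\pi(h))$ expressing that *-homomorphisms commute with the continuous functional calculus. I would establish this first for polynomials, where it is immediate from linearity and multiplicativity, and then extend to arbitrary continuous $f$ by Stone--Weierstrass approximation, using the norm continuity of $\pi$ proved above to pass to the uniform limit. Everything else reduces to bookkeeping around the spectral-radius identity for self-adjoint elements.
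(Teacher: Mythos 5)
The paper offers no proof of this proposition at all: it is imported verbatim as a citation of \cite[Theorem I.5.5]{Davidson}. Your argument is correct and is essentially the standard textbook proof underlying that citation --- contractivity from the chain $\Vert\pi(a)\Vert_\D^2 = \Vert\pi(a^*a)\Vert_\D = r(\pi(a^*a)) \leq r(a^*a) = \Vert a\Vert_\A^2$ using the spectral inclusion and the identity $\Vert h\Vert = r(h)$ for self-adjoint $h$, and the injective-implies-isometric direction by producing a nonzero element $f(h)$ of the kernel via the continuous functional calculus --- so there is nothing in the paper to contrast it with, and no gap to report.
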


In the next example, we present some *-isomorphisms related to the C*-algebras $M_n (\C)$.
\begin{example}[{\cite[Lemma III.2.1]{Davidson}}]\label{matrix-auto-example}
A map $\pi: M_n(\C) \longrightarrow M_n(\C)$ is a *-isomorphism if and only if there exists a unitary matrix $U \in M_n (\C)$ such that:
\begin{equation*}
\pi(a)=UaU^* \text{ for all } a \in M_n(\C).
\end{equation*}
\end{example}

In order to define quantum compact metric spaces we need to define another structure associated to C*-algebras.
\begin{definition}\label{states-def}
Let $\A$ be a unital C*-algebra. Let $\A'$ denote the set of continuous and linear complex-valued functions on $\A$. The {\em state space} of $\A$ is the set
\begin{equation*} \StateSpace(\A)=\left\{ \varphi \in \A' : 1=\varphi(1_\A) = \Vert \varphi \Vert_{\A'} \right\},
\end{equation*} 
where $\Vert \varphi \Vert_{\A'} = \sup \{\vert \varphi(a) \vert : a \in \A, \Vert a \Vert_\A =1\}$ is the operator norm.

A state $\varphi \in \StateSpace(\A)$ is called {\em tracial} if $\varphi(ab)=\varphi(ba)$ for all $a,b \in \A$ \cite[page 114]{Davidson}.
\end{definition}
As an example, we look to $M_n(\C)$.
\begin{example}\label{matrix-trace-example}
For $a \in M_n(\C)$, let $Tr_n(a)=\sum_{j =1}^n a_{j,j}$ be the trace of a matrix.  Define $tr_n = \frac{1}{n}Tr_n$.  By \cite[Example IV.5.4]{Davidson}, the map $tr_n$ is the unique tracial state on $M_n(\C)$.
\end{example}

In \cite{Rieffel98a}, M. A. Rieffel introduces the notion of a quantum compact metric space by providing a particular metric on the state space of a C*-algebra, which serves as a quantum analogue to the Monge-Kantorovich metric on Borel probablility measures of a compact Hausdorff space.  This metric lies outside the scope of this paper, so we provide an equivalent definition of a quantum compact metric space that utilizes a quantum analogue to the Lipschitz seminorm on continuous functions.

\begin{definition}[\cite{Rieffel98a,Rieffel99, Ozawa05}]\label{qcms-def}
Let $\A$ be a unital C*-algebra.  Let $\Lip$ be a seminorm on $\sa{\A}$ (possibly taking value $+\infty$).  The pair $(\A, \Lip)$ is a quantum compact metric space if $\Lip$ satisifies the following.
\begin{enumerate}
\item $\Lip$ is lower semi-continous with respect to $\Vert \cdot \Vert_\A$,
\item the set $\dom{\Lip}=\{ a \in \sa{\A}: \Lip(a) < \infty\}$ is dense in $\sa{\A}$,
\item the kernel of $\Lip$ is $\{a \in \sa{\A}: \Lip(a)=0\}=\R1_A =\{r1_\A \in \sa{\A} :  r \in \R\}$, and 
\item there exists a state $\mu \in \StateSpace(\A)$ such that the set:
\begin{equation*}
\{ a \in \sa{\A} : \mu(a)=0 \text{ and } \Lip(a) \leq 1\}
\end{equation*}
is totally bounded with respect to $\Vert \cdot \Vert_\A$. 
\end{enumerate}
\end{definition}
The Lipschitz seminorm on continuous functions satisfies a Leibniz rule and we may generalize this in the following way.
\begin{definition}
Fix $C \geq 1, D \geq 0$. A quantum compact metric space $(\A, \Lip)$ is a {\em {\Qqcms{(C,D)}}} if $\Lip$ is a \emph{$(C,D)$-quasi-Leibniz seminorm}, i.e. for all $a,b \in \sa{\A}$:
\begin{equation*}
\max\left\{ \Lip\left(a \circ b \right), \Lip\left(\{a,b\}\right) \right\} \leq C\left(\|a\|_\A \Lip(b) + \|b\|_\A\Lip(a)\right) + D \Lip(a)\Lip(b)\text{,}
\end{equation*}
where the Jordan product is $a \circ b=\frac{ab+ba}{2}$ and the Lie product is $\{a,b\}=\frac{ab-ba}{2i}$.
\end{definition}
In \cite{Latremoliere13}, F. \Latremoliere \ introduced a quantum analogue to the Gromov-Hausdorff distance \cite{burago01}, the quantum Gromov-Hausdorff propinquity. Indeed, the quantum Gromov-Hausdorff propinquity is a distance between quasi-Leibniz quantum compact metric spaces that preserves the C*-algebraic and metric structures and recovers the topology of the Gromov-Haus\-dorff distance, and thus provides an appropriate framework for the study of noncommutative metric geometry.  The definition is quite involved, so in the following theorem, we summarize  the results that pertain to our work in this paper, while also defining the standard notion of isomorphism between two quasi-Leibniz quantum comapct metric spaces, a quantum isometry.
\begin{theorem}[\cite{Latremoliere13, Latremoliere15}]\label{def-thm}
    The {\em quantum Gromov-Hausdorff propinquity} \\$\qpropinquity{}\left(\left(\A,\Lip_\A\right),\left(\B,\Lip_\B\right)\right)$ between two quasi-Leibniz quantum compact metric spaces $\left(\A,\Lip_\A \right)$ and $\left(\B,\Lip_\B \right)$ is a metric up to {\em quantum isometry}, i.e.  $\qpropinquity{}((\A,\Lip_\A),(\B,\Lip_\B)) = 0$ if and only if there exists a unital *-isomorphism $\pi : \A \rightarrow\B$ with $\Lip_\B\circ\pi = \Lip_\A$.

Moreover, $\qpropinquity{}$ recovers the Gromov-Hausdorff topology on compact metric spaces.
\end{theorem}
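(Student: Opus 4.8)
The statement collects the two foundational properties of $\qpropinquity{}$ established by \Latremoliere, so my plan is to organize the argument around its two halves: that $\qpropinquity{}$ is a metric up to quantum isometry, and that it recovers the Gromov-Hausdorff topology. Because $\qpropinquity{}$ is by construction the infimum of a length functional over a class of objects (bridges, or in the refined setting tunnels) that connect two \gQqcms{}s through a common ambient C*-algebra, non-negativity and symmetry come for free: the class of connecting objects is stable under reversal and the length functional is reversal-invariant. Finiteness likewise follows from axiom (4) of Definition (\ref{qcms-def}), which forces each space to have finite diameter. The real content therefore sits in the triangle inequality, the coincidence property, and the topological comparison.

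For the triangle inequality, the plan is to show that connecting objects compose. Given a tunnel of length $\ell_1$ from $(\A,\Lip_\A)$ to $(\B,\Lip_\B)$ and a tunnel of length $\ell_2$ from $(\B,\Lip_\B)$ to $(\D,\Lip_\D)$, one amalgamates them along the shared copy of $\B$ to build a single tunnel from $(\A,\Lip_\A)$ to $(\D,\Lip_\D)$ whose length is controlled by $\ell_1+\ell_2$ up to an arbitrarily small error coming from the way the two ambient algebras are glued. Passing to infima then yields $\qpropinquity{}((\A,\Lip_\A),(\D,\Lip_\D)) \leq \qpropinquity{}((\A,\Lip_\A),(\B,\Lip_\B)) + \qpropinquity{}((\B,\Lip_\B),(\D,\Lip_\D))$. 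The quasi-Leibniz condition has to be tracked through the amalgamation so that the composite object remains in the admissible class with the prescribed constants.

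For the coincidence property, one direction is easy: a unital *-isomorphism $\pi$ with $\Lip_\B\circ\pi=\Lip_\A$ lets one build a degenerate connecting object of length zero (embed $\A$ into $\B$ through $\pi$ and $\B$ into itself by the identity), so $\qpropinquity{}=0$. The converse is the hard direction, and this is where I expect the main obstacle to lie. Given a sequence of tunnels whose lengths tend to zero, the totally-bounded condition (4) of Definition (\ref{qcms-def}) makes each normalized Lip-ball norm-precompact; one then extracts, by a diagonal argument over a dense sequence of Lipschitz elements, limits of the maps carrying such elements from $\A$ into $\B$ and back, and verifies that these limits assemble into mutually inverse unital *-homomorphisms. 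Lower semicontinuity (axiom (1)) of the Lip-norms is what upgrades the limiting map from a mere contraction of $\Lip$ to a genuine intertwiner $\Lip_\B\circ\pi=\Lip_\A$, so that one really recovers a quantum isometry.

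Finally, the topology-recovery statement is handled by a two-sided comparison on the classical case $\A=C(X)$, $\B=C(Y)$ equipped with the ordinary Lipschitz seminorms. One bounds $\qpropinquity{}$ above by a constant multiple of $\gh(X,Y)$ by converting an optimal embedding of $X$ and $Y$ into a common compact metric space into an explicit bridge, and conversely one shows that propinquity convergence forces Gromov-Hausdorff convergence of the underlying spaces, so the two notions of convergence coincide. The delicate point throughout remains the passage to the limit in the coincidence property, since it demands simultaneous control of the algebraic structure (multiplicativity, $*$-preservation, unitality) and the metric structure (Lip-norm intertwining) of the limiting map.
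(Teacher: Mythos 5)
First, a point of comparison that matters here: the paper itself does not prove Theorem (\ref{def-thm}) at all. It is an imported result, stated with the citations \cite{Latremoliere13, Latremoliere15} precisely because its proof (many dozens of pages across those references) is outside the paper's scope; the authors explicitly present it as a summary of the properties of $\qpropinquity{}$ that they need downstream. So the only thing to measure your attempt against is Latr\'emoli\`ere's published arguments, and at the level of architecture your outline is faithful to them: a length functional over connecting objects (bridges and treks in \cite{Latremoliere13}, tunnels and journeys in \cite{Latremoliere15}), triangle inequality by composition of those objects, the coincidence property via compactness arguments powered by axiom (4) of Definition (\ref{qcms-def}) together with lower semicontinuity, and recovery of the Gromov-Hausdorff topology by a two-sided comparison in the commutative case.

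As a proof, however, your text has genuine gaps: it is a roadmap whose decisive steps are asserted rather than carried out, and the two places where essentially all of the content lives are exactly the ones you defer. First, composition of tunnels with additive control of length is itself a nontrivial theorem --- the dual propinquity was originally defined via journeys (finite chains of tunnels) precisely because tunnels do not obviously amalgamate, and showing that the glued object stays in the admissible quasi-Leibniz class is real work, not bookkeeping. Second, and more seriously, in the hard direction of the coincidence property the maps you extract by a diagonal argument are a priori only unital, positive, linear isometries on the self-adjoint parts; the step ``verify that these limits assemble into mutually inverse unital *-homomorphisms'' is precisely where the quasi-Leibniz hypothesis enters in an essential way, since multiplicativity of the limit is recovered from the Jordan--Lie structure through the Leibniz-type inequality and the behavior of target sets under products. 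Your sketch invokes the quasi-Leibniz condition only in the triangle-inequality bookkeeping and never engages with it here, and lower semicontinuity alone does not ``upgrade'' the limit to satisfy $\Lip_\B \circ \pi = \Lip_\A$; one needs the target-set machinery applied in both directions to get the two inequalities $\Lip_\B \circ \pi \leq \Lip_\A$ and $\Lip_\A \circ \pi^{-1} \leq \Lip_\B$. In short: the plan is consistent with the cited proofs, but the distance between this outline and a proof is the distance those papers exist to cover; within the present paper the honest ``proof'' of this statement is the citation itself.
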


\section{Gromov-Hausdorff propinquity between isomorphic full matrix algebras}

In \cite{AL}, the first author and F. \Latremoliere \ discovered quasi-Lebiniz Lip-norms on certain infinite-dimensional C*-algebras called approximately finite-dimensional C*-algebras (AF algebras) with certain tracial states.  Now, these AF algebras are built by an inductive sequence of finite-dimensional AF algebras (see \cite[Chapter 6]{Murphy90}).  While it can be the case that two distinct inductive sequences can produce AF algebras that are *-isomorphic (see \cite[Example III.2.2]{Davidson}), the Lip-norms constructed in \cite[Theorem 3.5]{AL} seem to acknowledge the particular inductive sequence.  Therefore, the question arose of whether these Lip-norms can distinguish two *-isomorphic AF algebras with different inductive sequences, where by {\em distinguish,} we mean by way of a quantum isometry of Theorem  (\ref{def-thm}).  This would show that these Lip-norms are truly adding further structure to the C*-algebraic structure.  Yet, showing that two spaces are not  quantum isometric is quite a non-trivial task since the condition in Theorem (\ref{def-thm}) has to be checked for every *-isomorphism to provide a negative result. Indeed, two spaces $(\A, \Lip_\A), (\B, \Lip_\B)$ are {\em not} quantum isometric if and only if for any *-isomorphism $\pi: \A \longrightarrow \B$ (if it exists), we have that $\Lip_\B \circ \pi \neq \Lip_\A$.  But, in the case of $M_n(\C),$ we have that the  *-isomorphisms are well understood as seen in Example (\ref{matrix-auto-example}).  Thus, in this paper, we try to tackle this question of quantum isometry in the case of the finite-dimensional C*-algebras $M_n (\C)$ with respect to different finite inductive sequences, and we accomplish the task in Theorem (\ref{matrix-positive-propinquity}).  We begin by defining the particular quantum metric spaces that we will be working with, which requires the following notions.

\begin{definition}
A {\em conditional expectation}  $P : \A \longrightarrow \B$ onto $\B$, where $\A$ is a unital  C*-algebra and $\B$ is a unital C*-subalgebra of $\A$, is a linear map such that:
\begin{enumerate}
\item for all $a \in \A$ there exists $b \in \B$ such that $P(aa^*)=bb^*$, 
\item for all $ a\in \A$, we have $\Vert P(a) \Vert_\A \leq \Vert a \Vert_\A$,
\item for all $b,c \in \B$ and $a \in  \A$, we have $P(bac)=bP(a)c,$ and
\item $P(b)=b$ for all $b \in \B.$
\end{enumerate}
\end{definition}

\begin{notation}\label{matrix-isomorphisms}
We write $k |n$ to denote that $k$ divides $n$ throught this paper.

 Let $n,k \in \N \setminus \{0\}, n>1$ with $k |n$ and $k<n$.

 Let $\pi_{k,n}: M_k (\C) \longmapsto M_n (\C)$ be the unital *-monomorphism of \cite[Lemma III.2.1]{Davidson} determined by:
  \begin{equation*}
		\pi_{k,n}(a) = \blockmx{a} \text{ for all } a \in M_k(\C)
	\end{equation*}
	where there are $\frac{n}{k}$ non-overlapping copies of $a$ filling the block diagonal and $0$'s elsewhere.   Coordinate-wise, the map $\pi_{k,n}$ satisfies the following:
	\begin{equation*}
		\pi_{k,n}(a)_{p,q} =
			\begin{cases}
				a_{i,j} &: \text{if } \exists r \in \AthruB{0}{\frac{n}{k}-1} \text{ such that } p = i+rk \text{ and } q = j + rk \\
				 & \quad \text{ for some } i, j \in \AthruB{1}{k} \\
				0 &: \text{otherwise}
			\end{cases}
	\end{equation*}
	for all $a \in M_k(\mathbb{C})$ and $p,q \in \{1, \ldots n\}$.  
	
	Note that $\pi_{k,n}(M_k(\C))$ is a unital C*-subalgebra of $M_n(\C)$.  
\end{notation}
In Lemma (\ref{matrix-isomorphisms-lemma}), we will present another coordinate-wise description of $\pi_{k,n}$, which allows for direct computation rather than finding the terms and is much more suitable and convenient for the many calculations we will work with in this paper.  But, for now, we are prepared to  define the quantum metrics on $M_n(\C)$.
\begin{theorem}\label{matrix-qcms-theorem}
Let $n\in \N \setminus \{0,1\}$ such that there exists $k \in \N \setminus \{0,1\}$  with $k<n$ and $k | n$.  Recall the definition of the tracial state $tr_n : M_n (\C) \longrightarrow \C$ of Example (\ref{matrix-trace-example}).

If $P_{j,n}: M_n(\C) \longrightarrow \pi_{j,n}(M_j(\C))$ for $j=1,k$ denotes the unique $tr_n$-preserving condional expectation and we define for all $a \in \sa{M_n(\C)}$ the seminorms:
\begin{equation*}
L_{M_n(\C),1}(a)=\Vert a-P_{1,n}(a) \Vert_{M_n(\C)}
\end{equation*}
and 
\begin{equation*}
L_{M_n(\C),k}(a)= \max \left\{ \Vert a-P_{1,n}(a) \Vert_{M_n(\C)},k \cdot \Vert a-P_{k,n}(a) \Vert_{M_n(\C)}\right\},
\end{equation*}
then both $\left(M_n(\C), L_{M_n(\C),1}\right)$ and $\left(M_n(\C), L_{M_n(\C),k}\right)$ are $(2,0)$-quasi-Leibniz quantum compact metric spaces.
\end{theorem}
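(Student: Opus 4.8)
The plan is to separate the purely metric content from the algebraic content, exploiting that $M_n(\C)$ is finite-dimensional so that conditions (1)--(3) of Definition \ref{qcms-def} are essentially automatic and the real work is the quasi-Leibniz inequality together with the total boundedness in condition (4). First I would record that, since $\pi_{1,n}(M_1(\C)) = \C I_n$, the $tr_n$-preserving conditional expectation is explicitly $P_{1,n}(a) = tr_n(a)\,I_n$, so that $L_{M_n(\C),1}(a) = \|a - tr_n(a) I_n\|_{M_n(\C)}$.

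For the four conditions of a quantum compact metric space: lower semicontinuity (1) is immediate, since each seminorm is a composition of the continuous map $\mathrm{id}-P_{j,n}$ with the norm and a continuous function is lower semicontinuous; density of the domain (2) is trivial because both seminorms are finite on all of $\sa{M_n(\C)}$. For the kernel (3), since each $P_{j,n}$ is unital, $L_{M_n(\C),1}(a)=0$ forces $a = P_{1,n}(a) = tr_n(a) I_n \in \R I_n$, while conversely every $rI_n$ lies in each $\pi_{j,n}(M_j(\C))$ and is therefore fixed by $P_{j,n}$, so the kernel is exactly $\R I_n$ for both seminorms (the $L_{M_n(\C),k}$ case following from $L_{M_n(\C),1} \le L_{M_n(\C),k}$). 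For (4) I would take $\mu = tr_n$: if $tr_n(a)=0$ then $P_{1,n}(a)=0$, so $L_{M_n(\C),1}(a) = \|a\|_{M_n(\C)}$ and likewise $L_{M_n(\C),k}(a) \ge \|a\|_{M_n(\C)}$, whence the set in (4) is contained in the unit ball and is totally bounded by finite-dimensionality.

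The crux is the $(2,0)$-quasi-Leibniz inequality, and the main obstacle is obtaining $D=0$ rather than a positive $D$, which a naive symmetric expansion of $a\circ b$ does not give. I would instead prove, for any conditional expectation $P$ onto a unital C*-subalgebra and the seminorm $L_P(a) = \|a - P(a)\|$, the asymmetric identity
\begin{equation*}
ab - P(ab) = P(a)\bigl(b - P(b)\bigr) + (\mathrm{id}-P)\bigl((a-P(a))b\bigr),
\end{equation*}
which follows from the bimodule property $P(P(a)b) = P(a)P(b)$. Using contractivity of $P$ (so $\|P(a)\| \le \|a\|$ and $\|(\mathrm{id}-P)(x)\| \le 2\|x\|$) this yields $\|ab - P(ab)\| \le \|a\| L_P(b) + 2\|b\| L_P(a)$. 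Writing $a\circ b = \tfrac12(ab+ba)$ and $\{a,b\} = \tfrac1{2i}(ab-ba)$ and applying this bound to both $ab$ and $ba$ (swapping the roles of $a$ and $b$ in the second) gives
\begin{equation*}
\max\bigl\{L_P(a\circ b),\, L_P(\{a,b\})\bigr\} \le \tfrac32\bigl(\|a\| L_P(b) + \|b\| L_P(a)\bigr) \le 2\bigl(\|a\| L_P(b) + \|b\| L_P(a)\bigr),
\end{equation*}
so each $L_P$ is $(2,0)$-quasi-Leibniz. Applying this with $P=P_{1,n}$ and $P=P_{k,n}$ handles $L_{M_n(\C),1}$ and each piece of $L_{M_n(\C),k}$.

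Finally I would assemble $L_{M_n(\C),k} = \max\{L_{P_{1,n}},\, k\,L_{P_{k,n}}\}$. Scaling a $(2,0)$-quasi-Leibniz seminorm by the positive constant $k$ preserves the property, since the inequality is homogeneous of the same degree on both sides, and the pointwise maximum of two $(2,0)$-quasi-Leibniz seminorms $L', L''$ is again $(2,0)$-quasi-Leibniz, because the bounds for $L'$ and for $L''$ on each of the Jordan and Lie products are separately dominated by $2(\|a\|\max\{L',L''\}(b) + \|b\|\max\{L',L''\}(a))$. This gives the quasi-Leibniz property for $L_{M_n(\C),k}$ and completes the proof; I expect the asymmetric identity above to be the only delicate point, with everything else being bookkeeping enabled by finite-dimensionality.
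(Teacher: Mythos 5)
Your proof is correct, but it is not the paper's proof: the paper offers no self-contained argument at all --- its entire proof of this theorem is a citation to Step 3 of the proof of \cite[Theorem 3.5]{AL}, where these seminorms arise as the finite-dimensional building blocks of Lip-norms on AF algebras equipped with a faithful tracial state. You instead prove everything from scratch. Your handling of conditions (1)--(3) of Definition (\ref{qcms-def}) and of total boundedness in (4) (via $\mu = tr_n$, the identification $P_{1,n}(a) = tr_n(a)I_n$ forced by trace-preservation onto $\C I_n$, and compactness of bounded sets in finite dimensions) is sound, and the heart of your argument, the identity $ab - P(ab) = P(a)\bigl(b - P(b)\bigr) + (\mathrm{id} - P)\bigl((a - P(a))b\bigr)$, is valid: it follows from the bimodule property $P(P(a)b) = P(a)P(b)$, which is property (3) of the paper's definition of conditional expectation applied with the subalgebra elements $P(a)$ and $I_n$, and combined with contractivity it yields the quasi-Leibniz bound with constant $\tfrac{3}{2} \leq 2$, slightly sharper than what the statement requires. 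Your two closing reductions --- that scaling by $k$ preserves $(C,0)$-quasi-Leibniz (true precisely because $D = 0$; a nonzero $D$ term would not scale homogeneously) and that a pointwise maximum of $(2,0)$-quasi-Leibniz seminorms is again $(2,0)$-quasi-Leibniz --- are both correct and are exactly what is needed to assemble $L_{M_n(\C),k}$. The trade-off between the two routes: the paper's citation inherits the result from a setting (infinite-dimensional AF algebras) in which conditions (2) and (4) are far from automatic and must genuinely be proved, whereas your argument is elementary, self-contained, and makes transparent that the only structural inputs are the conditional-expectation bimodule property and contractivity.
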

\begin{proof}
This is \cite[Theorem 3.5]{AL} and in particular Step 3 of its proof.
\end{proof}

Our main goal --- realized in Theorem (\ref{matrix-positive-propinquity}) --- is to show that the  two quantum metric spaces for a fixed $k,n$ are not quantum isometric.  These spaces are motivated by \cite[Theorem 4.9]{AL}, where these spaces do in fact form the finite-dimensional quanutm metric spaces used in their construction.  Therefore, our work is a legitimate  step towards understanding the quantum isometries between the infinite-dimensional C*-algebras presented in \cite{AL}.  Next, our proof of Theorem (\ref{matrix-positive-propinquity}) requires a detailed and coordinate-wise description of the maps $\pi_{k,n}$ and $P_{k,n}$, which could be easily implemented algorithmically.  We begin with $\pi_{k,n}$.

\begin{lemma}\label{matrix-isomorphisms-lemma}
If $k,n \in \N \setminus \{0\}, n>1$ such that $k |n$ and $k<n$, then
	the map $\pi_{k,n}: M_k(\C) \longrightarrow M_n(\C)$ of Notation (\ref{matrix-isomorphisms}) satisfies:
		\begin{equation*}
			\pi_{k,n}(a)_{p,q} =
				\begin{cases}
					a_{1+\myMod{(p-1)}{k}, 1+ \myMod{(q-1)}{k}} &: \text{if } \lfloor \frac{p-1}{k} \rfloor = \lfloor \frac{q-1}{k} \rfloor \\
					0 &: \text{otherwise}
				\end{cases}
		\end{equation*}
for all $a \in M_k (\C)$ and $p,q \in \{1, \ldots n\}$.
	\end{lemma}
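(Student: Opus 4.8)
The plan is to verify that the formula claimed here agrees entry-by-entry with the defining formula in Notation (\ref{matrix-isomorphisms}); the bridge between the two descriptions is simply the division algorithm applied to $p-1$ and $q-1$ with divisor $k$.

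First I would fix $p,q \in \{1,\ldots,n\}$ and write, via the division algorithm, $p-1 = r_p k + s_p$ and $q-1 = r_q k + s_q$ with quotients $r_p = \lfloor (p-1)/k\rfloor$, $r_q = \lfloor (q-1)/k\rfloor$ and remainders $s_p = (p-1) \bmod k$, $s_q = (q-1)\bmod k$ lying in $\{0,\ldots,k-1\}$. Setting $i = s_p+1$ and $j = s_q+1$ then yields $i,j \in \{1,\ldots,k\}$ together with $p = i + r_p k$ and $q = j + r_q k$. Since $p-1 \le n-1$ and $k \mid n$, one has $r_p \le \lfloor (n-1)/k\rfloor = \frac{n}{k} - 1$, and likewise for $r_q$, so both quotients lie in $\{0,\ldots,\frac{n}{k}-1\}$, exactly the range appearing in Notation (\ref{matrix-isomorphisms}).

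The crux is the uniqueness of this representation: for $p \in \{1,\ldots,n\}$, the only way to write $p = i + rk$ with $i \in \{1,\ldots,k\}$ and $r \in \{0,\ldots,\frac{n}{k}-1\}$ is with $r$ the quotient and $i-1$ the remainder of $p-1$ upon division by $k$, since $i-1 \in \{0,\ldots,k-1\}$. Hence the defining condition in Notation (\ref{matrix-isomorphisms})---that there exist a \emph{single} index $r$ realizing $p = i + rk$ and $q = j + rk$ simultaneously---holds precisely when $r_p = r_q$, that is, when $\lfloor (p-1)/k\rfloor = \lfloor (q-1)/k\rfloor$. In that case uniqueness forces $i = 1 + ((p-1)\bmod k)$ and $j = 1 + ((q-1)\bmod k)$, so $\pi_{k,n}(a)_{p,q} = a_{i,j}$ is exactly the claimed value; when the floors differ, no common $r$ exists and both formulas return $0$.

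I expect no serious obstacle here: the entire content is the translation of the quantified existence statement into an equality of floors, made possible by uniqueness of quotient and remainder. The one point deserving care is that the existential in Notation (\ref{matrix-isomorphisms}) couples $p$ and $q$ through a \emph{shared} $r$, and it is precisely this sharing---pinned down to the two floor values by uniqueness---that makes the floor equality both necessary and sufficient for a nonzero entry.
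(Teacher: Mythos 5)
Your proposal is correct and follows essentially the same route as the paper's proof: both rest on the division algorithm identifying $\lfloor (p-1)/k\rfloor$ and $(p-1)\bmod k$ as the unique quotient and remainder, which translates the shared-index existential in Notation (\ref{matrix-isomorphisms}) into the equality of floors. The only difference is organizational --- the paper splits into two cases and handles the ``otherwise'' case by contradiction, while you phrase it as a single equivalence via uniqueness, which is if anything slightly cleaner.
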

	
		\begin{proof}
		Let $a \in M_k(\C)$ and fix	 $p, q \in \AthruB{1}{n}$. 
			\begin{case} Assume $\floorfunc{\frac{p-1}{k}}=  \floorfunc{\frac{q-1}{k}}$. 
			\end{case}
Now, since $k\vert n$, there exist $ r, s \in \AthruB{0}{\frac{n}{k}-1}$ and $i, j \in \AthruB{1}{k}$ such that $p = i + rk$ and $q = j + sk$. Therefore, we have $\floorfunc{\frac{p-1}{k}} = \floorfunc{\frac{i + rk - 1}{k}} = \floorfunc{\frac{i-1}{k} + r} = \floorfunc{\frac{i-1}{k}} + r = r$ because $0 \leq i-1 < k$.
			
Also, we have $\floorfunc{\frac{q-1}{k}} = \floorfunc{\frac{j + sk - 1}{k}} = \floorfunc{\frac{j-1}{k} + s} = \floorfunc{\frac{j-1}{k}} + s = s$ because $0 \leq j-1 < k$. Thus $r=s$ by the Case 1 assumption and $\pi_{k,n}(a)_{p,q}=a_{i,j}$ by Notation (\ref{matrix-isomorphisms}).  However, by modular arithmetic, since $c \mod d=c-d\floorfunc{\frac{c}{d}}$, we gather that $1+p-1-rk=i$ and $1+q-1-rk=j$ imply that:
\begin{equation*}  
			\pi_{k,n}(a)_{p,q} = a_{1 + \myMod{(p-1)}{k}, 1+ \myMod{(q-1)}{k}}.
			\end{equation*}

			\begin{case}  Assume $\floorfunc{\frac{p-1}{k}}\neq \floorfunc{\frac{q-1}{k}}$.
			\end{case}
			By Notation (\ref{matrix-isomorphisms}), assume by way of contradiction that there exists $r \in \{0, \ldots , \frac{n}{k}-1 \}$ such that $p=i+rk$ and $q=j+rk$ for some $i,j \in \{1, \ldots, k\}$. Then, the argument of Case 1 implies that $r \neq r$, a contradiction.  Hence  $\pi_{k,n}(a)_{p,q}=0$ by Notation (\ref{matrix-isomorphisms}).
\end{proof}
		
		Next, we move to understanding the conditional expectations $P_{k,n}$ in a coordinate-wise manner.  First, we provide some notation for a standard basis for $M_n(\C)$. 

\begin{notation}\label{matrix-units-notation}
		For $n \in \mathbb{N} \setminus \{0\}, j \in \AthruB{1}{n}, k \in \AthruB{1}{n}$, let  $E_{n,j,k} \in M_n(\mathbb{C})$ denote the standard {\em matrix unit} (see \cite[Section III.1]{Davidson}) defined coordinate-wise by:
\begin{equation*}
			(E_{n,j,k})_{p,q} =
				\begin{cases}
					1 &: p = j \text{ and } q = k \\
					0 &: \text{otherwise}\\
				\end{cases}
		\end{equation*}
		for $p, q \in \AthruB{1}{n}$, and note that the set $\{ E_{n,j,k} \in M_n(\C) : j, k \in \AthruB{1}{n} \}$ forms a basis for $M_n(\mathbb{C})$.  
		
		Assume that $l \in \N \setminus \{0\}$ and $l|n$.  Let
		 \begin{equation*}B_{l,n}= \{ \pi_{l,n}(a) \in M_n(\C) : a \text{ is a matrix unit of } M_l(\C) \}.
		 \end{equation*}
\end{notation}

Next, the tracial state $tr_n$ induces an inner product on $M_n(\C)$ via $\langle a,b\rangle= tr_n(b^*a)$.  In \cite{AL}, they use this observation and that the matrix units are orthogonal with respect to this inner product to provide a general description of $P_{k,n}$ in terms of the matrix units and $\pi_{k,n}$ (see \cite[Expression (4.1)]{AL}).  We will utilize this in Lemma (\ref{projection}) to provide an explicit coordinate-wise description of $P_{k,n}$.  But, first, we prove a lemma about the relationship between the tracial state $tr_n$  and the *-monomorphism $\pi_{k,n}$. 

\begin{lemma}\label{matrix-trace-lemma}
If $k, n \in \N \setminus \{0\}$ and  $k|n$, then  $tr_n \circ \pi_{k,n} = tr_k$.
\end{lemma}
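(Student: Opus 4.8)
The plan is to reduce the identity $tr_n \circ \pi_{k,n} = tr_k$ to a direct computation of the ordinary (unnormalized) trace of the block-diagonal matrix $\pi_{k,n}(a)$, using the coordinate-wise formula just established in Lemma (\ref{matrix-isomorphisms-lemma}). Since both sides are $\C$-linear functionals on $M_k(\C)$, it suffices to verify the equality for an arbitrary fixed $a \in M_k(\C)$, and in fact the argument below handles all such $a$ simultaneously.

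First I would recall from Example (\ref{matrix-trace-example}) that $tr_m = \frac{1}{m} Tr_m$, where $Tr_m(b) = \sum_{p=1}^m b_{p,p}$, so that the claim is equivalent to showing $Tr_n(\pi_{k,n}(a)) = \frac{n}{k} Tr_k(a)$. To compute the left-hand side I would examine the diagonal entries $\pi_{k,n}(a)_{p,p}$ for $p \in \AthruB{1}{n}$. For $p = q$ the condition $\floorfunc{\frac{p-1}{k}} = \floorfunc{\frac{q-1}{k}}$ of Lemma (\ref{matrix-isomorphisms-lemma}) holds trivially, so the lemma gives $\pi_{k,n}(a)_{p,p} = a_{1+\myMod{(p-1)}{k},\, 1+\myMod{(p-1)}{k}}$.

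The key combinatorial step is then a counting argument: as $p$ ranges over $\AthruB{1}{n}$, the quantity $1+\myMod{(p-1)}{k}$ ranges over $\AthruB{1}{k}$, and because $k \mid n$ each value $i \in \AthruB{1}{k}$ is attained exactly $\frac{n}{k}$ times, namely once for each $r \in \AthruB{0}{\frac{n}{k}-1}$ via $p = i + rk$. Summing the diagonal entries therefore yields
\[
Tr_n(\pi_{k,n}(a)) = \sum_{p=1}^n a_{1+\myMod{(p-1)}{k},\, 1+\myMod{(p-1)}{k}} = \frac{n}{k}\sum_{i=1}^k a_{i,i} = \frac{n}{k}\, Tr_k(a).
\]
Dividing by $n$ then gives $tr_n(\pi_{k,n}(a)) = \frac{1}{k}\, Tr_k(a) = tr_k(a)$, completing the argument.

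I do not expect any genuine obstacle here: the entire content is the block-diagonal structure of $\pi_{k,n}$ recorded in Notation (\ref{matrix-isomorphisms}), and the only thing to get right is the bookkeeping that each diagonal index of $a$ is hit exactly $\frac{n}{k}$ times. An essentially equivalent alternative would bypass Lemma (\ref{matrix-isomorphisms-lemma}) and read the multiplicity $\frac{n}{k}$ straight off the ``$\frac{n}{k}$ non-overlapping copies of $a$'' description in Notation (\ref{matrix-isomorphisms}); I would prefer the explicit coordinate formula precisely because it makes the counting unambiguous.
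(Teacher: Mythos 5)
Your proposal is correct and follows essentially the same route as the paper's proof: both apply the coordinate formula of Lemma (\ref{matrix-isomorphisms-lemma}) to the diagonal entries of $\pi_{k,n}(a)$, reindex the sum so that each diagonal entry $a_{i,i}$ appears exactly $\frac{n}{k}$ times, and conclude that $Tr_n(\pi_{k,n}(a)) = \frac{n}{k}\,Tr_k(a)$. The only cosmetic difference is that the paper carries the factor $\frac{1}{n}$ through the computation rather than isolating the unnormalized trace identity first.
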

\begin{proof}
Let $a \in M_k(\mathbb{C})$. Then, by Lemma (\ref{matrix-isomorphisms-lemma}):
					\begin{equation*}
						\begin{split}
							(tr_n \circ \pi_{k,n})(a) &= \frac{1}{n} Tr_n(\pi_{k,n}(a))\\
							&= \frac{1}{n} \sum_{i=1}^n \pi_{k,n}(a)_{i,i}\\
							&= \frac{1}{n} \sum_{i=1}^n a_{1 + \myMod{(i-1)}{k}, 1 + \myMod{(i-1)}{k}}\\
							&= \frac{1}{n} \sum_{i = 1}^{\frac{n}{k}} \sum_{j = 1}^k a_{j,j}\\
							&= \frac{1}{n} \sum_{i=1}^{\frac{n}{k}} Tr_k(a)\\
							&= \frac{1}{n} \cdot \frac{n}{k} Tr_k(a)\\
							&= \frac{1}{k} Tr_k(a)= tr_k(a),
						\end{split}
					\end{equation*}
					which completes the proof.
\end{proof}

\begin{lemma}\label{projection} If $n \in \N \setminus \{0\}$ and there exists $k \in \N \setminus \{0\}$ such that $k| n$, then  using notation from Theorem (\ref{matrix-qcms-theorem}), for all $a \in M_n (\mathbb{C})$, we have: 
		\begin{equation*}
			P_{k,n}(a) = \frac{k}{n} \sum_{p=1}^{k} \sum_{q=1}^{k} \left( \sum_{l=0}^{\frac{n}{k}-1} a_{p + kl, q + kl}\right) \pi_{k,n}(E_{k,p,q}),
		\end{equation*}
		and	coordinate-wise, this is:
		\begin{equation*}
			P_{k,n}(a)_{i,j} =
				\begin{cases}
					\frac{k}{n} \sum_{l=0}^{\frac{n}{k}-1} a_{p+kl, q+kl} &: \text{ if } \floorfunc{\frac{i-1}{k}} = \floorfunc{\frac{j-1}{k}}  \text{ and } p - 1 = \myMod{(i-1)}{k}\\
		&			\quad  \text{ and } q - 1 = \myMod{(j-1)}{k} \\
					0 &: \text{otherwise} 
				\end{cases}
		\end{equation*}
		for all $i,j \in \{1, \ldots, n\}$.

		Furthermore, if $k=1$, then $P_{1,n}(a)=tr_n(a)I_n$.
\end{lemma}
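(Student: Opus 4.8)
The plan is to start from the characterization of $P_{k,n}$ as an orthogonal projection. Since $P_{k,n}$ is the unique $tr_n$-preserving conditional expectation onto $\pi_{k,n}(M_k(\C))$, it coincides with the orthogonal projection onto that subspace for the inner product $\inner{a}{b} = tr_n(b^*a)$, which is exactly Expression (4.1) of \cite{AL}. Because $\pi_{k,n}$ is an injective *-homomorphism and the matrix units $\{E_{k,p,q} : p,q \in \AthruB{1}{k}\}$ form a basis of $M_k(\C)$, the set $B_{k,n} = \{\pi_{k,n}(E_{k,p,q}) : p,q \in \AthruB{1}{k}\}$ of Notation (\ref{matrix-units-notation}) is a basis of $\pi_{k,n}(M_k(\C))$; I would first verify that it is in fact an \emph{orthogonal} basis. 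The projection formula then reads
\[
P_{k,n}(a) = \sum_{p=1}^k \sum_{q=1}^k \frac{\inner{a}{\pi_{k,n}(E_{k,p,q})}}{\inner{\pi_{k,n}(E_{k,p,q})}{\pi_{k,n}(E_{k,p,q})}}\, \pi_{k,n}(E_{k,p,q}),
\]
so the whole argument reduces to evaluating the two inner products.

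For the normalization, I would use that $\pi_{k,n}$ is multiplicative and *-preserving together with the matrix-unit relation $E_{k,q,p}E_{k,p,q} = E_{k,q,q}$ to obtain $\pi_{k,n}(E_{k,p,q})^*\pi_{k,n}(E_{k,p,q}) = \pi_{k,n}(E_{k,q,q})$, and then apply Lemma (\ref{matrix-trace-lemma}) to conclude $\inner{\pi_{k,n}(E_{k,p,q})}{\pi_{k,n}(E_{k,p,q})} = tr_k(E_{k,q,q}) = \frac{1}{k}$. The same computation, applied to a pair of distinct basis elements, shows the products vanish and hence establishes the orthogonality claimed above. For the coefficient, I would write $\inner{a}{\pi_{k,n}(E_{k,p,q})} = tr_n(\pi_{k,n}(E_{k,q,p})a) = \frac{1}{n}Tr_n(\pi_{k,n}(E_{k,q,p})a)$, expand the diagonal entries of the product, and invoke the coordinate description of $\pi_{k,n}$ from Lemma (\ref{matrix-isomorphisms-lemma}): the factor $\pi_{k,n}(E_{k,q,p})_{i,m}$ is nonzero (and equal to $1$) precisely when $i$ and $m$ lie in the same diagonal block and $(i-1)\bmod k = q-1$, $(m-1)\bmod k = p-1$. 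Tracking which indices survive, the diagonal sum collapses to $\sum_{l=0}^{n/k-1} a_{p+kl, q+kl}$, so the coefficient is $\frac{1}{n}\sum_{l=0}^{n/k-1} a_{p+kl,q+kl}$. Substituting both inner products into the projection formula yields the first displayed formula of the statement.

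The coordinate-wise formula then follows by applying Lemma (\ref{matrix-isomorphisms-lemma}) once more: $\pi_{k,n}(E_{k,p,q})_{i,j} = 1$ exactly when $\floorfunc{\frac{i-1}{k}} = \floorfunc{\frac{j-1}{k}}$, $p-1 = (i-1)\bmod k$, and $q-1 = (j-1)\bmod k$, and is $0$ otherwise. Hence reading off the $(i,j)$-entry of the sum selects the single surviving pair $(p,q)$ and reproduces the case distinction in the statement. Finally, the special case $k=1$ is immediate: $M_1(\C) = \C$ has the single matrix unit $E_{1,1,1}$ with $\pi_{1,n}(E_{1,1,1}) = I_n$, so the formula degenerates to $P_{1,n}(a) = \frac{1}{n}\left(\sum_{l=0}^{n-1} a_{l+1,l+1}\right)I_n = tr_n(a)I_n$.

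I expect the main obstacle to be the index bookkeeping in evaluating $Tr_n(\pi_{k,n}(E_{k,q,p})a)$: one must argue carefully that for each diagonal position $i$ with $(i-1)\bmod k = q-1$ there is exactly one column index $m$ in the same block that contributes, that every other diagonal position contributes nothing, and that the retained entries are precisely $a_{p+kl,q+kl}$ as $l$ ranges over the $n/k$ blocks, so that the single sum over $l$ emerges cleanly. Everything else is a routine, if notation-heavy, application of Lemmas (\ref{matrix-trace-lemma}) and (\ref{matrix-isomorphisms-lemma}).
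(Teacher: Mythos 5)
Your proposal is correct and follows essentially the same route as the paper: both start from Expression (4.1) of \cite{AL}, compute the normalization $tr_n(b^*b)=\frac{1}{k}$ via $E_{k,q,p}E_{k,p,q}=E_{k,q,q}$ and Lemma (\ref{matrix-trace-lemma}), evaluate $Tr_n(\pi_{k,n}(E_{k,q,p})a)=\sum_{l=0}^{\frac{n}{k}-1}a_{p+kl,q+kl}$ by the coordinate description of $\pi_{k,n}$, and then read off the coordinate-wise formula from Lemma (\ref{matrix-isomorphisms-lemma}). The only cosmetic differences are that you explicitly flag verifying orthogonality of $B_{k,n}$ (which the paper absorbs into its citation of Expression (4.1)) and you handle the $k=1$ case by direct degeneration rather than by specializing the coordinate-wise formula.
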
  
\begin{proof}
By \cite[Expression (4.1)]{AL}, we have that:
		\begin{equation*}
			P_{k,n}(a) = \sum_{b \in B_{k,n}} \frac{tr_n(b^*a)}{tr_n(b^*b)} b.
		\end{equation*}
		for all $a \in M_n(\mathbb{C})$, where $B_{k,n}$ was defined in Notation (\ref{matrix-units-notation}). Thus, for  $b \in B_{k,n}$,  we have that $b = \pi_{k,n}(E_{k,p,q})$ for some $p,q \in \AthruB{1}{k}$, and thus 
				$b^*b = \pi_{k,n}(E_{k,q,p}) \pi_{k,n}(E_{k,p,q})= \pi_{k,n}(E_{k,q,p}  E_{k,p,q}).$ Now: 
					\begin{equation*}
			(E_{k,q,p} E_{k,p,q})_{i,j} = \sum_{l=1}^k (E_{k,q,p})_{i,l} \cdot (E_{k,p,q})_{l,j},
		\end{equation*}
		and:
		\begin{equation*}
			(E_{k,q,p})_{i,l} \cdot (E_{k,p,q})_{l,j} =
				\begin{cases}
					1 &: \text{if }i = q \text{ and } j = q \text{ and } l = p \\
					0 &: \text{otherwise}
				\end{cases}
		\end{equation*}
Therefore $E_{k,q,p}  E_{k,p,q} = E_{k,q,q}$. Hence, by Lemma (\ref{matrix-trace-lemma}), we gather that:
		\begin{equation*}
		\begin{split}
			tr_n(b^*b)&=tr_n(\pi_{k,n}(E_{k,q,p} E_{k,p,q}))\\
			 &= tr_k(E_{k,q,p} E_{k,p,q})\\
	&= tr_k(E_{k,q,q}) \\
				&= \frac{1}{k} Tr_k(E_{k,q,q})\\
				&= \frac{1}{k} \cdot 1 = \frac{1}{k}.
			\end{split}
		\end{equation*}

Thus, we now have:
		\begin{equation}\label{explicit-projection}
			\begin{split}
				P_{k,n}(a)& = \sum_{p = 1}^k \sum_{q=1}^k \frac{tr_n(\pi_{k,n}(E_{k,q,p})  a)}{\frac{1}{k}} \pi_{k,n}(E_{k,p,q})\\
				&= \sum_{p = 1}^k \sum_{q=1}^k k \cdot tr_n(\pi_{k,n}(E_{k,q,p})  a)  \pi_{k,n}(E_{k,p,q})\\
				&= \sum_{p = 1}^k \sum_{q=1}^k k \cdot \frac{1}{n} Tr(\pi_{k,n}(E_{k,q,p})  a)  \pi_{k,n}(E_{k,p,q})\\
				&= \frac{k}{n} \sum_{p = 1}^k \sum_{q=1}^k Tr(\pi_{k,n}(E_{k,q,p})  a) \pi_{k,n}(E_{k,p,q})\\
				&= \frac{k}{n} \sum_{p = 1}^k \sum_{q=1}^k \left( \sum_{i=1}^n (\pi_{k,n}(E_{k,q,p})  a)_{i,i} \right) \pi_{k,n}(E_{k,p,q}).
			\end{split}
		\end{equation}
		
		Next, fix $i \in \{1, \ldots, n \}$,by matrix multiplication:
		\begin{equation*}
			\sum_{i=1}^n (\pi_{k,n}(E_{k,q,p}) a)_{i,i} = \sum_{i=1}^n \sum_{j=1}^n \pi_{k,n}(E_{k,q,p})_{i,j} \cdot a_{j,i}
		\end{equation*}
		But, for $j \in \{1, \ldots, n\}$, we have:
	\begin{equation*}
			\pi_{k,n}(E_{k,q,p})_{i,j} =
				\begin{cases}
					1 &: \text{if }\exists l \in \AthruB{0}{\frac{n}{k}-1} \text{ such that } i = q +kl \text{ and } j = p + kl\\
					0 &: \text{otherwise}
				\end{cases}
	\end{equation*}
and:
		\begin{equation*}
			\pi_{k,n}(E_{k,q,p})_{i,j} \cdot a_{j,i} =
				\begin{cases}
					a_{j,i} &: \text{if }\exists l \in \AthruB{0}{\frac{n}{k}-1} \text{ such that } j = p+kl \text{ and } i = q + kl\\
					0 &: \text{otherwise}
				\end{cases}
		\end{equation*}
Hence: 
	\begin{equation*}
				\sum_{i=1}^n \sum_{j=1}^n \pi(E_{k,q,p})_{i,j} \cdot a_{j,i} = \sum_{l=0}^{\frac{n}{k}-1} a_{p + kl, q + kl}.
			\end{equation*}
		And, by Expression (\ref{explicit-projection}), we conclude that:
		\begin{equation}\label{proj-expression}
			P_{k,n}(a) = \frac{k}{n} \sum_{p=1}^k \sum_{q=1}^k \left( \sum_{l=0}^{\frac{n}{k}-1} a_{p + kl, q + kl} \right)\pi_{k,n}( E_{k,p,q}).
		\end{equation}
		
		The coordinate-wise expression follows from Lemma (\ref{matrix-isomorphisms-lemma}). Indeed:
		
		Let $i,j \in \{1, \ldots, n\}$.  If $\floorfunc{ \frac{i - 1}{k}} = \floorfunc{ \frac{j-1}{k}}$, then $P_{k,n}(a)_{i,j}$ lies in one of the $k$-by-$k$ diagonal blocks of the $n$-by-$n$ matrix. So, there exist $p, q$ such that $\pi_{k,n}(E_{k,p,q})_{i,j} = 1$. Namely $p = 1 + \myMod{(i-1)}{k}, q = 1 + \myMod{(j-1)}{k}$. However, this pair $p, q$ corresponds to a term in the sum definining $P_{k,n}$ in  Expression (\ref{proj-expression}), or corresponds to $\frac{k}{n} \left( \sum_{l=0}^{\frac{n}{k} - 1} a_{p + kl, q + kl} \right) \pi_{k,n}(E_{k,p,q})$. And, since the $i,j$th entry of $\pi_{k,n}(E_{k,p,q})$ is 1, this gives us that:
		\begin{equation*}
			P_{k,n}(a)_{i,j}=\sum_{l=0}^{\frac{n}{k} - 1} a_{p + kl, q+kl}.
		\end{equation*}
		
	If $\floorfunc{\frac{i-1}{k}} \neq \floorfunc{\frac{j-1}{k}}$, then for all $p, q$ between 1 and $k$, $(\pi_{k,n}(E_{k,p,q}))_{i,j} = 0$, by definition of $\pi_{k,n}$, so $(P_{k,n}(a))_{i,j} = 0$. 
	
	The last statement of this Lemma follows from this coordinate-wise description with $k=1$.
\end{proof}

For computational purposes, we present an alternative perspective on the projection map before proceeding.

\begin{proposition}\label{projection-explained}
	Let $a \in M_n(\mathbb{C})$, and let $k$ be an integer that divides $n$. Consider the $k$-by-$k$ diagonal blocks of $a$, denoted from top left diagonal block to bottom right diagonal block by $B_1, B_2, \dots, B_{n/k}$. The projection $P_{k,n}(a)$ is the image of the arithmetic mean of these blocks under the map $\pi_{k,n}$. In other words,
	
	\begin{equation*}
		P_{k,n}(a) = \pi_{k,n} \left( \frac{k}{n} \cdot \sum_{i=1}^{n/k} B_i \right).
	\end{equation*}
\end{proposition}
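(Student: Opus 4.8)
The plan is to recognize this proposition as essentially a reformulation of Lemma (\ref{projection}), so that the work reduces to bookkeeping that matches the block-averaging description against Expression (\ref{proj-expression}). First I would fix notation for the diagonal blocks: for $i \in \{1,\dots,n/k\}$, the $i$-th diagonal block $B_i \in M_k(\C)$ has entries $(B_i)_{p,q} = a_{(i-1)k+p,\,(i-1)k+q}$ for $p,q \in \{1,\dots,k\}$. Reindexing with $l = i-1$ running over $\{0,\dots,n/k-1\}$, this reads $(B_{l+1})_{p,q} = a_{p+kl,\,q+kl}$, which already exhibits the entries appearing in the inner sum of Lemma (\ref{projection}).

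Next I would form the matrix $M := \frac{k}{n}\sum_{i=1}^{n/k} B_i \in M_k(\C)$ and read off its entries directly, namely $M_{p,q} = \frac{k}{n}\sum_{l=0}^{n/k-1} a_{p+kl,\,q+kl}$. Since $\{E_{k,p,q} : p,q \in \{1,\dots,k\}\}$ is a basis for $M_k(\C)$ by Notation (\ref{matrix-units-notation}), I can expand $M = \sum_{p=1}^k \sum_{q=1}^k M_{p,q}\, E_{k,p,q}$.

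The final step uses linearity of the *-monomorphism $\pi_{k,n}$:
\begin{equation*}
\pi_{k,n}(M) = \sum_{p=1}^k \sum_{q=1}^k M_{p,q}\, \pi_{k,n}(E_{k,p,q}) = \frac{k}{n}\sum_{p=1}^k \sum_{q=1}^k \left(\sum_{l=0}^{n/k-1} a_{p+kl,\,q+kl}\right)\pi_{k,n}(E_{k,p,q}),
\end{equation*}
which is precisely Expression (\ref{proj-expression}) for $P_{k,n}(a)$. Hence $P_{k,n}(a) = \pi_{k,n}(M)$, as claimed. I do not anticipate any genuine obstacle here; the only point demanding care is reconciling the block index $i \in \{1,\dots,n/k\}$ with the summation index $l \in \{0,\dots,n/k-1\}$ of Lemma (\ref{projection}), together with confirming that the entry $(B_i)_{p,q}$ of a diagonal block really is $a_{(i-1)k+p,\,(i-1)k+q}$.
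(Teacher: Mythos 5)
Your proof is correct and takes the same route as the paper, whose entire proof is the single line ``This follows from Lemma (\ref{projection}).'' You have simply made that deduction explicit: reindexing the diagonal blocks, expanding the averaged block in the matrix-unit basis, and invoking linearity of $\pi_{k,n}$ to recover Expression (\ref{proj-expression}) is exactly the bookkeeping the paper leaves to the reader.
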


\begin{proof}
	This follows from Lemma (\ref{projection}).
\end{proof}

Finally, we are in a position to study the Lip-norms of Theorem (\ref{matrix-qcms-theorem}).

\begin{lemma}\label{unitary-vanish}
Let $n \in \N \setminus \{0,1\}$. Using notation from Theorem (\ref{matrix-qcms-theorem}), if 
		$\pi : M_n(\mathbb{C}) \rightarrow M_n(\mathbb{C})$ is a *-isomorphism, then:	
		\begin{equation*}
			L_{M_n(\mathbb{C}),1} \circ \pi(a) = L_{M_n(\mathbb{C}),1}(a).
		\end{equation*}
		\end{lemma}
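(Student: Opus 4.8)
The plan is to reduce the statement to the isometry property of *-iso\-morphisms (Proposition (\ref{morphism-prop})) after first rewriting $L_{M_n(\C),1}$ in a more transparent form. By the last statement of Lemma (\ref{projection}), we have $P_{1,n}(a)=tr_n(a)I_n$ for every $a \in \sa{M_n(\C)}$, so
\begin{equation*}
L_{M_n(\C),1}(a) = \left\| a - tr_n(a) I_n \right\|_{M_n(\C)}.
\end{equation*}
Thus the lemma amounts to showing that the quantity $\| a - tr_n(a)I_n \|_{M_n(\C)}$ is invariant under every *-isomorphism $\pi$ of $M_n(\C)$.

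First I would invoke Example (\ref{matrix-auto-example}) to write $\pi(a) = UaU^*$ for a fixed unitary $U \in M_n(\C)$. The crucial observation is that the scalar $tr_n(\pi(a))$ equals $tr_n(a)$: since $tr_n$ is tracial (Example (\ref{matrix-trace-example})), we have $tr_n(UaU^*) = tr_n(U^*Ua) = tr_n(a)$ using $U^*U = I_n$. (Equivalently, one may appeal to the uniqueness of the tracial state on $M_n(\C)$ together with the fact that $a \mapsto tr_n(\pi(a))$ is again a tracial state.)

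Next, using $U I_n U^* = I_n$, I would factor the argument of the norm through $\pi$:
\begin{equation*}
\pi(a) - tr_n(\pi(a)) I_n = UaU^* - tr_n(a)\, U I_n U^* = U\bigl(a - tr_n(a) I_n\bigr)U^* = \pi\bigl(a - tr_n(a)I_n\bigr).
\end{equation*}
Since $\pi$ is a *-isomorphism, Proposition (\ref{morphism-prop}) guarantees it is an isometry, so the norm of the right-hand side equals $\| a - tr_n(a)I_n \|_{M_n(\C)}$. Combining this with the reformulation of $L_{M_n(\C),1}$ above yields
\begin{equation*}
L_{M_n(\C),1}(\pi(a)) = \left\| a - tr_n(a)I_n \right\|_{M_n(\C)} = L_{M_n(\C),1}(a),
\end{equation*}
as desired.

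There is no serious obstacle here: every ingredient is already available in the excerpt. The only point requiring a moment's care is the trace-invariance $tr_n(\pi(a)) = tr_n(a)$, which is precisely what allows the central element $tr_n(a)I_n$ to be pulled through $\pi$; once this is in hand, the conclusion is immediate from the fact that *-isomorphisms are isometries.
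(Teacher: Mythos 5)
Your proof is correct and follows essentially the same route as the paper's: both write $\pi(a) = UaU^*$ via Example (\ref{matrix-auto-example}), use Lemma (\ref{projection}) to identify $P_{1,n}(a) = tr_n(a)I_n$, exploit traciality to get $tr_n(UaU^*) = tr_n(a)$, and factor $\pi(a) - tr_n(\pi(a))I_n = U\bigl(a - tr_n(a)I_n\bigr)U^*$ before concluding by norm-invariance under unitary conjugation. The only cosmetic difference is that you cite Proposition (\ref{morphism-prop}) explicitly for the final isometry step, which the paper leaves implicit.
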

		\begin{proof}
			Let $\pi: M_n (\C) \longrightarrow M_n(\C)$ be a *-isomorphism.  By \cite[Lemma III.2.1]{Davidson}, there exists a unitary $U \in M_n(\mathbb{C})$ such that $\pi(a)=UaU^*$ for all $a \in M_n(\C)$. Also, by Lemma (\ref{projection}), we gather $P_{1,n}(UaU^*) = tr_n(UaU^*)  I_n=tr_n(U^*Ua)I_n= tr_n(a) I_n$  for all $a \in M_n(\C)$.  Now, let $a \in M_n (\mathbb{C})$, thus:
\begin{equation*}
\begin{split}
L_{M_n(\mathbb{C}),1} \circ \pi(a) &= \Vert UaU^* - P_{1,n}(UaU^*)\Vert_{M_n(\mathbb{C})}\\
&=	\Vert UaU^* - P_{1,n}(UaU^*)\Vert_{M_n(\mathbb{C})}\\
& = \Vert UaU^* - tr_n(a) I_n\Vert_{M_n(\mathbb{C})}\\
&=\Vert UaU^* -  U(tr_n(a)  I_n)U^*\Vert_{M_n(\mathbb{C})}\\
&=\Vert U(a - tr_n(a) I_n)U^*\Vert_{M_n(\mathbb{C})}\\
&= \Vert U(a - P_{1,n}(a))U^*\Vert_{M_n(\mathbb{C})}\\
				&= \Vert a - P_{1,n}(a)\Vert_{M_n(\mathbb{C})} \\
				&=L_{M_n(\mathbb{C}),1} (a),
\end{split}
		\end{equation*}
		which completes the proof.
		\end{proof}
		\begin{lemma}\label{l-disagree-general}
If $k, n \in \mathbb{N} \backslash \{0\}, n>2$  such that $k \vert n$ and $1<k<n$, then using notation from Theorem (\ref{matrix-qcms-theorem}), we have:
\begin{equation*}L_{M_n(\mathbb{C}),1} \neq L_{M_n(\mathbb{C}),k}.
\end{equation*}
\end{lemma}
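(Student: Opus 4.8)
The plan is to exploit the fact that, by definition, $L_{M_n(\C),k}(a) = \max\left\{ L_{M_n(\C),1}(a), k\Vert a - P_{k,n}(a)\Vert_{M_n(\C)}\right\} \geq L_{M_n(\C),1}(a)$ for every $a \in \sa{M_n(\C)}$. Hence to separate the two seminorms it suffices to exhibit a single self-adjoint matrix $a$ for which $k\Vert a - P_{k,n}(a)\Vert_{M_n(\C)} > \Vert a - P_{1,n}(a)\Vert_{M_n(\C)}$. I would take the diagonal matrix unit $a = E_{n,1,1}$, which is self-adjoint, and simply compute both seminorms on it.

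First I would compute $L_{M_n(\C),1}(E_{n,1,1})$. By Lemma (\ref{projection}), $P_{1,n}(E_{n,1,1}) = tr_n(E_{n,1,1})I_n = \frac{1}{n}I_n$, so $E_{n,1,1} - P_{1,n}(E_{n,1,1})$ is the diagonal matrix with a single entry $\frac{n-1}{n}$ and the remaining $n-1$ diagonal entries equal to $-\frac{1}{n}$. Being self-adjoint, its operator norm is the largest modulus among its eigenvalues (Example (\ref{full-matrix-ex})), namely $\frac{n-1}{n}$ since $n \geq 2$.

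Next I would compute $P_{k,n}(E_{n,1,1})$. By Proposition (\ref{projection-explained}), the only nonzero $k\times k$ diagonal block of $E_{n,1,1}$ is the top-left one, which equals $E_{k,1,1}$, so $P_{k,n}(E_{n,1,1}) = \frac{k}{n}\pi_{k,n}(E_{k,1,1})$: the diagonal matrix carrying the value $\frac{k}{n}$ in positions $1+rk$ for $r = 0, \ldots, \frac{n}{k}-1$. Consequently $E_{n,1,1} - P_{k,n}(E_{n,1,1})$ is diagonal with one entry $\frac{n-k}{n}$, with $\frac{n}{k}-1$ entries equal to $-\frac{k}{n}$, and zeros elsewhere, so its operator norm is $\max\left\{\frac{n-k}{n}, \frac{k}{n}\right\}$. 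Here the hypothesis enters crucially: $k|n$ together with $k<n$ forces $n \geq 2k$, whence $\frac{n-k}{n} \geq \frac{k}{n}$ and the norm equals $\frac{n-k}{n}$.

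Finally I would compare the two quantities: it remains to check $k\cdot\frac{n-k}{n} > \frac{n-1}{n}$, equivalently $k(n-k) > n-1$. I would verify this via the factorization $k(n-k) - (n-1) = (k-1)(n-k-1)$, which is strictly positive since $k \geq 2$ gives $k-1 \geq 1$ and $n \geq 2k$ gives $n-k-1 \geq k-1 \geq 1$. This yields $L_{M_n(\C),k}(E_{n,1,1}) = \frac{k(n-k)}{n} > \frac{n-1}{n} = L_{M_n(\C),1}(E_{n,1,1})$, so the two seminorms disagree. I do not anticipate a serious obstacle; the only delicate point is correctly identifying which eigenvalue realizes each operator norm, which is exactly where the arithmetic constraint $n \geq 2k$ is used.
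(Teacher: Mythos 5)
Your proof is correct and follows essentially the same route as the paper's: the paper tests the seminorms on $k\cdot E_{n,1,1}$, which by homogeneity gives exactly your computation on $E_{n,1,1}$ scaled by $k$, with the same use of $n\geq 2k$ to identify the operator norms. The only cosmetic difference is your closing arithmetic, where the identity $k(n-k)-(n-1)=(k-1)(n-k-1)$ replaces the paper's chain of inequalities.
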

\begin{proof}
	Consider $a \in M_n(\mathbb{C})$:
	\begin{equation*}
		a = \begin{bmatrix}
			k & & & \text{\huge 0}\\
			 & 0 && \\
			&& \ddots &\\
			\text{\huge 0}&&& 0\\
		\end{bmatrix}
	\end{equation*}
	Now, by Lemma (\ref{projection}), we have:
	\begin{equation*}
		P_{1,n}(a) = \blockmx{\frac{k}{n}},
	\end{equation*}
	and thus:
	\begin{equation*}
		a - P_{1,n}(a) =
			\begin{bmatrix}
				\frac{k(n-1)}{n} &&&& \text{\huge 0}\\
				& -\frac{k}{n} &&& \\
				&& -\frac{k}{n} && \\
				&&& \ddots & \\
				\text{\huge 0} &&&& -\frac{k}{n}
			\end{bmatrix} \in \sa{M_n(\C)}
	\end{equation*}
	Therefore by Example (\ref{full-matrix-ex}), we have 
		$L_{M_n(\mathbb{C}),1}(a) =\Vert a - P_{1,n}(a) \Vert_{M_n(\mathbb{C})} = \frac{k(n-1)}{n}$ 
	since $\frac{k}{n}(n-1) \geq \frac{k}{n}.$
	
	Now, we consider $P_{k,n}$.  By Proposition (\ref{projection-explained}), we need only examine the diagonal $k$-by-$k$ blocks of $a$, which are:
	\begin{equation*}
		\begin{bmatrix}
			k & 0 &\cdots & 0 \\
			0 &0 & \cdots & 0\\
			\vdots & \vdots & \ddots  & \vdots \\
			0 & 0 & \cdots & 0\\
		\end{bmatrix}
		\begin{bmatrix}
			0 & 0 & \cdots & 0\\
			\vdots & \vdots & \cdots  & \vdots \\
			\vdots & \vdots & \cdots  & \vdots \\
			0 & 0 & \cdots & 0\\
		\end{bmatrix}
		\cdots
		\begin{bmatrix}
			0 & 0 & \cdots & 0\\
			\vdots & \vdots & \cdots  & \vdots \\
			\vdots & \vdots & \cdots  & \vdots \\
			0 & 0 & \cdots & 0\\
		\end{bmatrix}.
	\end{equation*}
	Summing these $\frac{n}{k}$ matrices, we get:
	\begin{equation*}
		\begin{bmatrix}
			k & 0 &\cdots & 0 \\
			0 &0 & \cdots & 0\\
			\vdots & \vdots & \ddots  & \vdots \\
			0 & 0 & \cdots & 0\\
		\end{bmatrix},
	\end{equation*}
	and when we divide by $\frac{n}{k}$ and take the image under $\pi_{k,n}$, we arrive at:
	\begin{equation*}
		P_{k,n}(a) =
			\pi_{k,n}
			 \left(	\begin{bmatrix}
					\frac{k^2}{n} &&& \text{\huge 0}\\
					& 0 &&\\
					&& \ddots &\\
					\text{\huge 0} &&& 0\\
				\end{bmatrix} \right)
	\end{equation*}
	by Proposition (\ref{projection-explained}).
	Thus:
	\begin{equation*}
		a - P_{k,n}(a) =
			\begin{bmatrix}
				\frac{k(n-k)}{n} &&&&&&&&&  \text{\huge 0}\\
				& \ddots &&&&&&&& \\
				&& 0 &&&&&&& \\
				&&& -\frac{k^2}{n} &&&&&&\\
				&&&& \ddots &&&&&\\
				&&&&& 0 &&&&\\
				&&&&&& \ddots &&& \\
				&&&&&&& -\frac{k^2}{n}  &&\\
				&&&&&&&& \ddots &\\
				\text{\huge 0} &&&&&&&&& 0
			\end{bmatrix} \in \sa{M_n(\C)}
	\end{equation*}
	by definition of $\pi_{k,n}$ in Notation (\ref{matrix-isomorphisms}). 
	Since $k \vert n$ and $k<n,$ we have that $k \leq \frac{n}{2}$. This implies that $n - k \geq k$, and so $k(n-k) \geq k^2$. Therefore $\frac{k(n-k)}{n} \geq \frac{k^2}{n}$, which implies that:
	\begin{equation*}
		\Vert a - P_{k,n}(a) \Vert_{M_n(\mathbb{C})} = \frac{k(n-k)}{n} \text{ and }		k \cdot \Vert a - P_{k,n}(a) \Vert_{M_n(\mathbb{C})} = \frac{k^2 (n-k)}{n}
	\end{equation*}
	by Example (\ref{full-matrix-ex}). 
Next, since $k \leq \frac{n}{2}$, we have $n-k \geq \frac{n}{2}$ as $k \geq 2.$ Hence $k(n-k) \geq 2 \cdot \frac{n}{2} = n \implies
	 k(n-k) \geq n > n - 1$, which means $k^2(n-k) > k(n-1)$, and finally we have:
	\begin{equation*}
		\frac{k^2(n-k)}{n} > \frac{k(n-1)}{n},
	\end{equation*}
and thus, $k \cdot \Vert a - P_{k,n}(a) \Vert_{M_n(\mathbb{C})} > \Vert a - P_{1,n}(a) \Vert_{M_n(\mathbb{C})}$.  Therefore:
	\begin{equation*}
		L_{M_n(\mathbb{C}),k}(a) = \frac{k^2(n-k)}{n}
	\end{equation*}
	and we conclude that $L_{M_n(\mathbb{C}),1} \neq L_{M_n(\mathbb{C}),k}$.
\end{proof}

\begin{theorem}\label{matrix-positive-propinquity} Using notation from Lemma (\ref{l-disagree-general}), if $k, n \in \mathbb{N} \backslash \{0\}, n>2$  such that $k \vert n$ and $1<k<n$, then the quantum compact metric spaces:
	\begin{equation*}
		\left(M_n (\mathbb{C}), L_{M_n(\mathbb{C}),1}\right) \text{ and } \left(M_n (\mathbb{C}), L_{M_n(\mathbb{C}),k}\right)
	\end{equation*} 
are not quantum isometric of Theorem (\ref{def-thm}), and therefore:
	\begin{equation*}
		\mathsf{\Lambda} \left(\left(M_n (\mathbb{C}), L_{M_n(\mathbb{C}),1}\right) , \left(M_n (\mathbb{C}), L_{M_n(\mathbb{C}),k}\right) \right) >0,
	\end{equation*}
where $\mathsf{\Lambda}$ is the quantum Gromov-Hausdorff propinquity of Theorem (\ref{def-thm}).
\end{theorem}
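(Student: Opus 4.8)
The plan is to argue by contradiction, exploiting the fact that $L_{M_n(\mathbb{C}),1}$ is invariant under \emph{every} *-isomorphism while $L_{M_n(\mathbb{C}),1} \neq L_{M_n(\mathbb{C}),k}$. By Theorem (\ref{def-thm}), it suffices to show that the two spaces are not quantum isometric, i.e.\ that there is no unital *-isomorphism $\pi : M_n(\mathbb{C}) \longrightarrow M_n(\mathbb{C})$ with $L_{M_n(\mathbb{C}),k} \circ \pi = L_{M_n(\mathbb{C}),1}$; the positivity of the propinquity then follows immediately, since $\qpropinquity{}$ is a metric up to quantum isometry and hence vanishes only on quantum isometric pairs.

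Suppose toward a contradiction that such a $\pi$ exists. Since $\pi$ is a *-isomorphism of $M_n(\mathbb{C})$, so is its inverse $\pi^{-1}$ (recall Example (\ref{matrix-auto-example}), which identifies every such map as a unitary conjugation $a \mapsto UaU^\ast$). Composing the assumed identity $L_{M_n(\mathbb{C}),k} \circ \pi = L_{M_n(\mathbb{C}),1}$ on the right with $\pi^{-1}$ yields $L_{M_n(\mathbb{C}),k} = L_{M_n(\mathbb{C}),1} \circ \pi^{-1}$.

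The key step is then to invoke Lemma (\ref{unitary-vanish}), which asserts that $L_{M_n(\mathbb{C}),1}$ is unchanged under composition with any *-isomorphism. Applying this to $\pi^{-1}$ gives $L_{M_n(\mathbb{C}),1} \circ \pi^{-1} = L_{M_n(\mathbb{C}),1}$, and therefore $L_{M_n(\mathbb{C}),k} = L_{M_n(\mathbb{C}),1}$. This contradicts Lemma (\ref{l-disagree-general}), which exhibits an explicit self-adjoint matrix on which the two seminorms differ. Hence no quantum isometry exists, and by Theorem (\ref{def-thm}) the propinquity between the two spaces is strictly positive.

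I do not expect a serious obstacle, as the heavy lifting has already been carried out in Lemma (\ref{unitary-vanish}) and Lemma (\ref{l-disagree-general}). The only conceptual point worth emphasizing is that, a priori, ruling out a quantum isometry appears to require checking every one of the (infinitely many) *-isomorphisms of $M_n(\mathbb{C})$; the invariance of $L_{M_n(\mathbb{C}),1}$ under all of them is precisely what collapses this entire family of verifications into the single inequality $L_{M_n(\mathbb{C}),1} \neq L_{M_n(\mathbb{C}),k}$ furnished by Lemma (\ref{l-disagree-general}).
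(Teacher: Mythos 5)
Your proof is correct and follows essentially the same route as the paper: both reduce the problem to the two lemmas, using the invariance of $L_{M_n(\mathbb{C}),1}$ under every *-isomorphism (Lemma \ref{unitary-vanish}) to collapse the quantification over all *-isomorphisms into the single inequality of Lemma \ref{l-disagree-general}. The only cosmetic difference is that you argue by contradiction and pass to $\pi^{-1}$ to match the exact direction of the composition in Theorem \ref{def-thm}, whereas the paper verifies directly that $L_{M_n(\mathbb{C}),1} \circ \pi \neq L_{M_n(\mathbb{C}),k}$ for every $\pi$; these are trivially equivalent.
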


\begin{proof}

	By Theorem (\ref{def-thm}), we show for all unital *-isomorphisms $\pi:M_n (\mathbb{C}) \longrightarrow M_n(\C)$  that $L_{M_n(\mathbb{C}),1} \circ \pi \neq L_{M_n(\mathbb{C}),k}$. Let $\pi: M_n (\mathbb{C}) \rightarrow M_n (\mathbb{C})$, be a unital *-isomorphism. Therefore $L_{M_n(\mathbb{C}),1} \circ \pi = L_{M_n(\mathbb{C}),1}$ by Lemma (\ref{unitary-vanish}). But, Lemma (\ref{l-disagree-general}) implies that $L_{M_n(\mathbb{C}),1} \neq L_{M_n(\mathbb{C}),k}$. Thus, we conclude that $L_{M_n(\mathbb{C}),1} \circ \pi \neq L_{M_n(\mathbb{C}),k}$, and therefore
		$\left(M_n(\mathbb{C}), L_{M_n(\mathbb{C}),1}\right)$  and $\left(M_n (\mathbb{C}), L_{M_n(\mathbb{C}),k}\right)
$ 
are not quantum isometric.
\end{proof}

\section*{Acknowledgements}

The authors are grateful to Dr. Fr\'{e}d\'{e}ric Latr\'{e}moli\`{e}re for his feedback and support.

\vfill

\end{document}